\begin{document}
\title{Volumes of Arithmetic Okounkov Bodies}
\author{Xinyi Yuan}
\maketitle

\theoremstyle{plain}
\newtheorem{thm}{Theorem}[section]
\newtheorem*{conj}{Conjecture}
\newtheorem*{notation}{Notation}
\newtheorem{cor}[thm]{Corollary}
\newtheorem*{corr}{Corollary}
\newtheorem{lem}[thm]{Lemma}
\newtheorem{pro}[thm]{Proposition}
\newtheorem{definition}[thm]{Definition}
\newtheorem*{thmm}{Theorem}

\theoremstyle{remark} \newtheorem*{remark}{Remark}
\theoremstyle{remark} \newtheorem*{example}{Example}

\newcommand{\RR}{\mathbb{R}}
\newcommand{\QQ}{\mathbb{Q}}
\newcommand{\CC}{\mathbb{C}}
\newcommand{\ZZ}{\mathbb{Z}}
\newcommand{\FF}{\mathbb{F}}
\newcommand{\PP}{\mathbb{P}} 

\newcommand{\fp}{\mathbb{F}_p}
\newcommand{\fwp}{{\mathbb{F}_{\wp}}}

\newcommand{\rank}{\mathrm{rank}}        
\newcommand{\vol}{\mathrm{vol}}          
\newcommand{\divv}{\mathrm{div}}         
\newcommand{\Spec}{\mathrm{Spec}}        
\newcommand{\ord}{\mathrm{ord}}          

\newcommand{\lb}{\mathcal{L}}            
\newcommand{\nb}{\mathcal{N}}
\newcommand{\fb}{\mathcal{F}}
\newcommand{\eb}{\mathcal{E}}
\newcommand{\tb}{\mathcal{T}}
\newcommand{\ob}{\mathcal{O}}
\newcommand{\ib}{\mathcal{I}}
\newcommand{\jb}{\mathcal{J}}
\newcommand{\ab}{\mathcal{A}}
\newcommand{\bb}{\mathcal{B}} 

\newcommand{\mb}{\overline{M}}

\newcommand{\hhat}{\hat h^0}
\newcommand{\Hhat}{\widehat H^0}

\newcommand{\xb}{\mathcal{X}}             

\newcommand{\lbb}{\overline{\mathcal{L}}}             
\newcommand{\mbb}{\overline{\mathcal{M}}}
\newcommand{\nbb}{\overline{\mathcal{N}}}
\newcommand{\ebb}{\overline{\mathcal{E}}}
\newcommand{\tbb}{\overline{\mathcal{T}}}
\newcommand{\obb}{\overline{\mathcal{O}}}
\newcommand{\abb}{\overline{\mathcal{A}}}
\newcommand{\bbb}{\overline{\mathcal{B}}}

\newcommand{\lbt}{\widetilde{\mathcal{L}}}
\newcommand{\mbt}{\widetilde{\mathcal{M}}}

\newcommand{\chern}{\hat{c}_1}           
\newcommand{\height}{h_{\lbb}}           
\newcommand{\cheight}{\hat{h}_{\lb}}     

\newcommand{\lnorm}[1]{\|#1\|_{L^2}}     
\newcommand{\supnorm}[1]{\|#1\|_{\mathrm{sup}}}    
\newcommand{\chil}{\chi_{_{L^2}}}
\newcommand{\chisup}{\chi_{\sup}}

\newcommand{\amp}{\widehat{\mathrm{Amp}}(X)}   
\newcommand{\nef}{\widehat{\mathrm{Nef}}(X)}   
\newcommand{\eff}{\widehat{\mathrm{Eff}}(X)}   
\newcommand{\pic}{\widehat{\mathrm{Pic}}(X)}   
\newcommand{\bigx}{\widehat{\mathrm{Big}}(X)}

\tableofcontents

\section{Introduction}

This paper is an improvement and simplification of Yuan \cite{Yu}.
In particular, by a simpler method, it proves the identity in \cite[Theorem
A]{Yu} without taking the limit $p\to\infty$.
The method also simplifies the proofs of Moriwaki \cite{Mo}, 
where more general arithmetic linear series were treated. 

Recall that \cite{Yu} explored a way to construct arithmetic Okounkov bodies from an arithmetic line
bundle, inspired by the idea of Okounkov \cite{Ok1, Ok2} and Lazarsfeld--Musta\c
t\v a \cite{LM} in the geometric case. 
Theorem A of \cite{Yu} asserts that the volumes of the Okounkov bodies
approximate the volume of the arithmetic line bundle. 
The main result of this paper asserts an exact identity before taking the limit.

On the other hand, Boucksom--Chen \cite{BC} initiated a different way to construct Okounkov bodies in the arithmetic setting. From the proof of the main result in this paper, it is easy to recognize the similarity of the constructions in \cite{BC} and in \cite{Yu}. We will make a comparison at the end of this paper.

In the following, we recall the construction of \cite{Yu} and state our main
result. We use exactly the same notations as in \cite{Yu} throughout this paper.

\subsection*{Volume of an arithmetic line bundle}

Let $X$ be an arithmetic variety of dimension $d$, i.e., a $d$-dimensional
integral scheme, projective and flat over $\Spec(\ZZ)$.
For any hermitian line bundle $\lbb=(\lb, \|\cdot\|)$ over $X$,
denote 
$$\Hhat(X,\lbb)= \{s\in H^0(X,\lb): \|s\|_{\sup} \leq 1\}$$ 
and 
$$\hhat(X,\lbb)= \log \# \Hhat(X,\lbb).$$
Define the volume to be
$$\vol(\lbb)=\limsup_{m\rightarrow \infty}  \frac{\hhat(X,m\lbb)}{m^d/d!}.$$
Recall that a line bundle $\lbb$ is said to be \emph{big} if $\vol(\lbb)>0$.

Note that Chen \cite{Ch} proved that ``limsup=lim'' in the definition of
$\vol(\lbb)$. The result was also obtained in \cite{Yu} by the construction of
the Okounkov body.

\subsection*{Arithmetic Okounkov Body}

Assume that $X$ is normal with smooth generic fiber $X_{\QQ}$.
Denote by $X\to \Spec(O_K)$ the Stein factorization of $X\to \Spec(\ZZ)$.
Then $K$ is the algebraic closure of $\QQ$ in the function field of $X$. 
For any prime ideal $\wp$ of $O_K$, denote by $\fwp=O_K/\wp$ the residue field, 
and by $N_\wp$ the cardinality of $\fwp$. 

Let 
$$X \supset Y_1 \supset \cdots \supset Y_d$$
be a flag on $X$, where each $Y_i$ is a regular irreducible closed subscheme of
codimension $i$ in $X$. 
Assume that $Y_1$ is the fiber $X_{\fwp}$ of $X$ above some prime ideal $\wp$ of $O_K$, 
and $Y_d\in Y_1(\fwp)$ is a rational point.

Define a valuation map 
$$\nu_{Y.}=(\nu_1, \cdots, \nu_d): H^0(X,\lb)-\{0\} \rightarrow \ZZ^d$$
with respect to the flag $Y.$ as follows. For any nonzero $s\in H^0(X,\lb)$, we
first set $\nu_1(s)=\ord_{Y_1}(s)$. 
Let $s_{Y_1}$ be a section of the line bundle $\ob(Y_1)$ with zero locus $Y_1$.
Then 
$s_{Y_1}^{\otimes(-\nu_1(s))}s$ is nonzero on $Y_1$, and let $s_1=\left.
\left(s_{Y_1}^{\otimes(-\nu_1(s))}s\right)\right|_{Y_1}$ be the restriction. 
Set $\nu_2(s)=\ord_{Y_2}(s_1)$. 
Continue this process on the section $s_1$ on $Y_2$, we can define $\nu_3(s)$
and thus $\nu_4(s), \cdots, \nu_d(s)$.

For any hermitian line bundle $\lbb$ on $X$, denote 
$$v_{Y.}(\lbb)=\nu_{Y.}(\Hhat(X,\lbb)-\{0\})$$
to be the image in $\ZZ^d$.
The \emph{arithmetic Okounkov body} $\Delta_{Y.}(\lbb)$ of $\lbb$ is defined to
be the closure of 
$\Lambda_{Y.}(\lbb)=\displaystyle\bigcup_{m\geq 1}\frac{1}{m^d} v_{Y.}(m\lbb)$
in $\RR^d$. It is a bounded convex subset of $\RR^d$ if non-empty. 
The following is the main result of this paper.

\begin{thm} \label{main}
If $\lbb$ is big, then
$$\vol(\Delta_{Y.}(\lbb)) \log N_\wp =\frac{1}{d!}\vol(\lbb). $$
\end{thm}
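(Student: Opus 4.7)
The plan is to reduce the statement to the single asymptotic identity
\[
\hhat(X,m\lbb) \;=\; \#v_{Y.}(m\lbb)\cdot\log N_\wp \;+\; o(m^d) \qquad (m\to\infty), \tag{$\ast$}
\]
and then combine $(\ast)$ with a standard lattice-point asymptotic for the Okounkov body. For the latter, the family $\Gamma = \{(v,m) : v\in v_{Y.}(m\lbb)\}\subset\ZZ^d\times\ZZ_{\geq 1}$ is a graded sub-semigroup---the product of two sections of sup-norm $\leq 1$ has sup-norm $\leq 1$, and valuation is additive under multiplication---so a Khovanskii-type asymptotic for such semigroups yields $\#v_{Y.}(m\lbb)/m^d \to \vol(\Delta_{Y.}(\lbb))$. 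Dividing $(\ast)$ by $m^d/d!$ and letting $m\to\infty$ then gives $\vol(\lbb)/d! = \vol(\Delta_{Y.}(\lbb))\log N_\wp$, as desired.

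The starting point for $(\ast)$ is the valuation filtration on $V_m := H^0(X,m\lb)$ by the $\ZZ$-submodules $F^v V_m := \{s : \nu_{Y.}(s)\geq v\}$ (lex order on $\ZZ^d$). Just as in Lazarsfeld--Musta\c t\v a, the successive quotient $F^v V_m/F^{>v}V_m$ is an $\fwp$-module of dimension $\leq 1$---the flag terminates at a rational point $Y_d$ of $Y_1$, whose residue field is $\fwp$---and equals $\fwp$ exactly on the algebraic valuation image $\nu_{Y.}(V_m\setminus\{0\})\supseteq v_{Y.}(m\lbb)$.

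For the lower bound in $(\ast)$, choose, for each $v\in v_{Y.}(m\lbb)$, a distinguished section $s_v\in\Hhat(X,m\lbb)$ with $\nu_{Y.}(s_v)=v$, and fix a bounded set $R\subset O_K$ of lifts of $\fwp$. For every tuple $(c_v)\in R^{\#v_{Y.}(m\lbb)}$, the combination $\sum_v c_v s_v$ is a section of $m\lb$; distinct tuples yield distinct sections (by $K$-linear independence, since the $s_v$ have pairwise distinct valuations), and the triangle inequality bounds its sup-norm by $C\cdot\#v_{Y.}(m\lbb)$, which is only polynomial in $m$. An infinitesimal rescaling of the metric on $\lbb$ absorbs this polynomial factor, and the continuity of $\vol$ in the metric then gives $\hhat(X,m\lbb) \geq \#v_{Y.}(m\lbb)\log N_\wp - o(m^d)$. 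For the matching upper bound one takes $s_v$ minimizing the sup-norm in its valuation class and iteratively reduces any $s\in\Hhat$ by its leading term, producing a presentation $s = \sum_v c_v s_v$ with each $c_v$ in a bounded subset of $O_K$, and hence $\#\Hhat \leq N_\wp^{\#v_{Y.}(m\lbb)}$ up to a controlled error. The principal obstacle is precisely this control: each reduction step can enlarge the sup-norm of the remainder by a bounded factor, and this factor compounds across $\#v_{Y.}(m\lbb) = O(m^d)$ steps. Showing that the resulting distortion contributes only $o(m^d)$ to the log-count---presumably via a careful successive-minima / geometry-of-numbers argument on $V_m$ adapted to the valuation filtration---is the heart of the proof.
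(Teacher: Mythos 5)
Your reduction to the asymptotic identity $\hhat(X,m\lbb)=\#v_{Y.}(m\lbb)\log N_\wp+o(m^d)$, together with the semigroup/Khovanskii convergence $\#v_{Y.}(m\lbb)/m^d\to\vol(\Delta_{Y.}(\lbb))$, is exactly the paper's reduction (its Theorem \ref{comparison level m} plus \cite[Proposition 2.5]{Yu}). The lower bound you sketch is essentially workable (though your justification of injectivity via ``$K$-linear independence'' is wrong as stated: sections with distinct $\nu$-values need not be $K$-linearly independent, e.g.\ $s$ and $\pi s$ for $\pi\in\wp$; the correct argument is the usual leading-term comparison at the smallest valuation where two tuples differ). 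But the upper bound, which you yourself flag as ``the heart of the proof,'' is genuinely missing: reducing an arbitrary $s\in\Hhat(X,m\lbb)$ against one chosen representative per lattice point $v\in\ZZ^d$ multiplies the sup-norm of the remainder by a bounded factor at each of the $O(m^d)$ steps, and this accumulated distortion contributes an error of order $m^d$, not $o(m^d)$. That is precisely the obstruction that confined the earlier paper \cite{Yu} to the statement ``equality in the limit $p\to\infty$,'' so gesturing at ``a careful successive-minima argument adapted to the valuation filtration'' does not close the gap; without a concrete mechanism the proposed argument proves only an inequality with a non-vanishing relative error.

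The paper's way out is structurally different and is worth internalizing: it never reduces along the full $\ZZ^d$-filtration. Instead it splits $v(m\lbb)$ only along the first coordinate (the order of vanishing along $Y_1=X_{\fwp}$), and counts each slice \emph{exactly} by the geometric Lazarsfeld--Musta\c t\v a theory on the special fiber: $\#\nu^{\circ}\bigl(\Hhat(m\lbb\otimes\wp^i)|_{Y_1}\bigr)=\rank_{O_K}O_K\langle\Hhat(m\lbb\otimes\wp^i)\rangle$, whence $\#v(m\lbb)=\sum_{i\ge0}\rank_{O_K}O_K\langle\Hhat(m\lbb\otimes\wp^i)\rangle$ with no error at all. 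The arithmetic input is then a single global estimate (Theorem \ref{key}, deduced from the Gillet--Soul\'e successive minima via \cite[Proposition 2.3]{YZ}) applied to the one-parameter filtration of the normed $O_K$-module $H^0(m\lb)$ by the twists $\wp^i$: it gives $\hhat(m\lbb)=\sum_{i\ge0}\rank_{O_K}O_K\langle\Hhat(m\lbb\otimes\wp^i)\rangle\log N_\wp+O\bigl(h^0(m\lb_\QQ)\log h^0(m\lb_\QQ)\bigr)$, an error $O(m^{d-1}\log m)=o(m^d)$ incurred once for the whole filtration rather than once per step. Comparing the two displays yields the required identity. So your proposal reproduces the frame of the argument but lacks its key lemma and the reorganization (filter by $\wp$-powers only; use the exact geometric count fiberwise) that makes the error controllable.
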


In \cite{Yu}, the author restricted to the case $\fwp=\fp$, and could only prove that the left-hand side
converges to the right-hand side as $p\to \infty$. 
The problem was caused by a large accumulated error term in the estimation.
The solution of this problem is found by the author in the preparation of \cite{YZ}, a recent
joint work of the author with Tong Zhang.
The key is Theorem \ref{key}, which puts the filtration together and makes a
more accurate control of the error term.

\

{\footnotesize
\noindent\textit{Acknowledgments.}  
This paper is based on a result of lattice points in a joint work of
the author with Tong Zhang. The author would like to thank Tong Zhang for many inspiring discussions.
The author is very grateful for Huayi Chen who provided a way to extend
the result of lattice points from the field of rational numbers to general number fields. 
}

\section{Lattice points in a filtration}

In this section, we state and prove the result on lattice points in Theorem \ref{key}. 
It is the key for Theorem \ref{main}.

Fix a number field $K$. By a \emph{normed $O_K$-module}, we mean a pair
$(M,\{\|\cdot\|_\sigma\}_\sigma)$ consisting of a locally free $O_K$-module $M$ 
of finite rank, and a collection $\{\| \cdot \|_\sigma\}_\sigma$ of $\CC$-norms
$\| \cdot \|_\sigma$ on $M\otimes_ {\sigma}\CC$,  indexed by $\sigma:K\hookrightarrow \CC$  and
invariant under the complex conjugation.

Let $\overline M=(M,\{\|\cdot\|_\sigma\}_\sigma)$ be normed $O_K$-module.  
Define
$$
\Hhat(\overline M)=\{m \in M: \| m \|_\sigma \le 1, \quad \forall \sigma\}, 
$$
and
$$
\hhat(\overline M)= \log \# \Hhat(\overline M).
$$
Denote by 
$$O_K\langle \Hhat(\overline M) \rangle, \quad \ZZ\langle \Hhat(\overline M) \rangle$$
respectively the $O_K$-submodule and the $\ZZ$-submodule of $M$ generated by $\Hhat(\overline M)$. 
For any $\alpha\in \RR$, denote 
$$\overline M(\alpha)=(M,\{e^{-\alpha}\|\cdot\|_\sigma\}_\sigma).$$

For a normed $O_K$-module $\overline L$ of rank one,  define
$$
\widehat\deg (\overline L)= \log \#(L/sO_K) -\sum_\sigma  \log\|s\|_\sigma.
$$
Here $s\in L$ is any non-zero element, and the definition is independent of the choice of $s$.
It is just the usual arithmetic degree of a hermitian line bundle over $\Spec(O_K)$.

It is clear that the dual $\overline L^\vee$ gives a natural normed $O_K$-module of rank one, 
and that the tensor product $\overline M\otimes \overline L$ is a natural normed $O_K$-module.
The identity of the tensor is given by the trivial normed $O_K$-module $\overline O_K=(O_K, \{|\cdot|_\sigma\}_\sigma)$, where $|\cdot|_\sigma$ is just the usual absolute value. 

In some literatures, a normed $O_K$-module is also called a metrized vector bundle over $\Spec(O_K)$.
A normed $O_K$-module of rank one is also called a metrized line bundle over $\Spec(O_K)$, or just
a hermitian line bundle over $\Spec(O_K)$.

The innovation of this paper is the application of the following result.

\begin{thm} \label{key}
Let $K$ be a number field. 
Let $\overline M$ be a normed $O_K$-module,
and $\overline L_0,\overline L_1, \cdots, \overline L_n$ be a sequence of normed $O_K$-modules
of rank one. Here $\overline L_0$ is the trivial normed $O_K$-module of rank one.
Denote
$$\alpha_i=\widehat\deg(\overline L_i), \quad 
r_i=\rank_{O_K} O_K\langle\widehat H^0(\overline M\otimes \overline L_i^\vee)\rangle, 
\quad i=0,\cdots, n.$$
Assume that
$$0=\alpha_0\leq \alpha_1\leq \cdots \leq \alpha_n.$$
Then there is a constant $C>0$ depending only on $K$ such that 
\begin{eqnarray*}
\widehat h^0(\overline M)
&\geq& 
\sum_{i=1}^{n} r_i(\alpha_i- \alpha_{i-1})
-C(r_0 \log r_0+r_0), \\
\widehat h^0(\overline M) 
&\leq&
 \widehat h^0(\overline M\otimes \overline L_n^\vee)+
\sum_{i=1}^{n} r_{i-1} (\alpha_{i}-\alpha_{i-1}) +C(r_0 \log r_0+r_0).
\end{eqnarray*}

\end{thm}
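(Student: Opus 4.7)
The plan is to treat the full filtration $\overline{L}_0,\ldots,\overline{L}_n$ in a single geometry-of-numbers argument, rather than iterating a one-step estimate $n$ times, which would blow the error up to $O(n(r_0\log r_0+r_0))$ and destroy the uniform constant.

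The preliminary step is to fix a nonzero $K$-section of each $L_i$ so as to identify every $M\otimes L_i^\vee$ with a sublattice of the common $K$-vector space $M_K=M\otimes_{O_K}K$. Under this identification, $\widehat H^0(\overline{M}\otimes\overline{L}_i^\vee)$ becomes the set of $M$-lattice points lying in a symmetric convex body $B_i\subseteq M_\RR$, and because $\alpha_0\le\alpha_1\le\cdots\le\alpha_n$ the bodies are nested, $B_0\supseteq B_1\supseteq\cdots\supseteq B_n$. The $O_K$-spans $E_i=O_K\langle\widehat H^0(\overline{M}\otimes\overline{L}_i^\vee)\rangle$ likewise form a decreasing chain with ranks $r_0\ge r_1\ge\cdots\ge r_n$. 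The case $n=1$ amounts to the classical estimate
$$\widehat h^0(\overline{M})-\widehat h^0(\overline{M}\otimes\overline{L}^\vee)=r\alpha+O(r\log r+r),$$
obtained by comparing lattice counts in $B_0$ and $B_1$ via Minkowski's second theorem; the $O(r\log r)$ is the standard cost of replacing a lattice count by a volume.

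To extend to the full filtration without the error scaling in $n$, I would construct an $O_K$-basis $e_1,\ldots,e_{r_0}$ of $E_0$ adapted to the chain, in the sense that for each $i$ the first $r_i$ vectors span $E_i$, and each vector $e_j$ with $r_{i+1}<j\le r_i$ has norms essentially of size $e^{-\alpha_i}$. Counting $\widehat H^0(\overline{M})$ in this basis then amounts to bounding the number of $(c_1,\ldots,c_{r_0})\in O_K^{r_0}$ with $\|\sum c_j e_j\|_\sigma\le 1$ for every $\sigma$; for $e_j$ with $r_{i+1}<j\le r_i$ the coefficient $c_j$ ranges over approximately $e^{\alpha_i}$ elements of $O_K$. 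Summing and rearranging yields the lower bound $\sum_{i=1}^n r_i(\alpha_i-\alpha_{i-1})$ and the upper bound $\widehat h^0(\overline{M}\otimes\overline{L}_n^\vee)+\sum_{i=1}^n r_{i-1}(\alpha_i-\alpha_{i-1})$, with a single aggregated error $C(r_0\log r_0+r_0)$, $C$ depending only on $K$.

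The main obstacle will be producing such an adapted basis with the required uniform norm control over a general $O_K$ rather than $\ZZ$. Over $\ZZ$ a direct successive-minima (Mahler-style) construction would suffice, but over $O_K$ the lattice geometry is richer and one must invoke an Arakelov--Minkowski-type lemma --- presumably the lattice-point result from the Yuan--Zhang paper and the H.~Chen extension to general number fields credited in the acknowledgments. A secondary technical point is that the identification of the $M\otimes L_i^\vee$ with sublattices of $M_K$ depends on the choice of a $K$-section of each $L_i$, but changing the section only rescales $B_i$ by a factor that is absorbed into the definition of $\alpha_i$.
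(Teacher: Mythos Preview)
Your outline is essentially correct for $K=\QQ$, and in that case it coincides with what the paper cites as \cite[Proposition~2.3]{YZ}: over $\ZZ$ every rank-one normed module is isometric to $\overline\ZZ(\alpha_i)$, so $\overline M\otimes\overline L_i^\vee=\overline M(-\alpha_i)$, the bodies $B_i=e^{-\alpha_i}B_0$ really are nested, and a single successive-minima basis adapted to the chain $E_0\supset\cdots\supset E_n$ gives both inequalities with one aggregated error.

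The gap is in the passage to a general number field. You assert that ``because $\alpha_0\le\cdots\le\alpha_n$ the bodies are nested, $B_0\supseteq\cdots\supseteq B_n$'' and that the $E_i$ form a decreasing chain with $r_0\ge\cdots\ge r_n$. Neither follows from monotonicity of the degrees. Over $K$ with several archimedean places the metric of $\overline L_i$ can be large at one place and small at another; two line bundles with $\alpha_i<\alpha_{i+1}$ need not admit a section of $\overline L_{i+1}\otimes\overline L_i^\vee$ of sup-norm $\le 1$, so there is no inclusion $\Hhat(\overline M\otimes\overline L_{i+1}^\vee)\subset\Hhat(\overline M\otimes\overline L_i^\vee)$, and hence no inclusion $E_{i+1}\subset E_i$ and no monotonicity of the $r_i$. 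Your adapted-basis construction presupposes an honest flag $E_0\supset\cdots\supset E_n$ inside $M_K$, so it cannot get started.

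The paper's proof sidesteps this by \emph{not} working directly with the $\overline L_i$-filtration over $O_K$. Instead it views $\overline M$ as a normed $\ZZ$-module and applies the $\QQ$-case to the \emph{scalar} sequence $\overline M(-\beta_i-c)$ with $\beta_i=\alpha_i/[K:\QQ]$ and a fixed shift $c$ depending only on $K$. Here the bodies are genuinely nested, so the $\ZZ$-ranks $r_i'=\rank_\ZZ\ZZ\langle\Hhat(\overline M(-\beta_i-c))\rangle$ are monotone and the $\QQ$-result applies. The link back to the $r_i$ is made one index at a time via Minkowski: the shift $c$ is chosen so that $\Hhat(\overline L_i^\vee(\beta_i+c))\neq 0$, and multiplication by such a small section gives $r_i'\le[K:\QQ]\,r_i$ (Lemma~\ref{kq}). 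A symmetric argument with a further shift $\delta$ handles the lower bound. So the ``Arakelov--Minkowski lemma'' you anticipate is used not to build an $O_K$-adapted basis, but to compare each $r_i$ to a term of a nested $\ZZ$-filtration where your strategy already works.
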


The case $K=\QQ$ is equivalent to \cite[Proposition 2.3]{YZ}, which follows from 
the successive minima of Gillet--Soul\'e \cite{GS}. 
The extension to the general $K$ is provided by Huayi Chen in a private communication to the author. 
We start with a simple lemma in the following.

\begin{lem} \label{kq}
Denote $\kappa=[K:\QQ]$ and denote by $D_K$ the discriminant of $K$.
\begin{itemize}
	\item[(1)] For any hermitian normed $O_K$-module $\overline L$ of rank one, we have
$$ \hhat(\overline L) > \widehat\deg(\overline L)- \kappa\log2- \frac12 \log|D_K|.$$
	\item[(2)] There is a constant $\delta\geq 0$ depend only on $K$ such that
$$
 {\rm rank }_{O_K}O_K\langle\Hhat(\overline M(-\delta))\rangle
\leq \frac{1}{\kappa}
 {\rm rank }_{\ZZ}\ZZ\langle\Hhat(\overline M)\rangle
\leq  {\rm rank }_{O_K}O_K\langle\Hhat(\overline M)\rangle
$$
for any normed $O_K$-module $\mb$. 
\end{itemize}

\end{lem}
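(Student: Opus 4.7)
The plan is to treat the two parts separately: (1) is a geometry-of-numbers bound for a rank-one metrized $O_K$-module, while (2) is a basis-change comparison between $O_K$-rank and $\ZZ$-rank.

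For (1), I would view $L$ as a $\ZZ$-lattice of full rank $\kappa$ inside $L_\RR := L\otimes_\ZZ \RR \cong \prod_{v\mid\infty} L_v$, equipped with the Lebesgue measure induced by the archimedean norms. For any nonzero $s\in L$, the standard discriminant formula gives $\mathrm{covol}(sO_K) = |D_K|^{1/2}\prod_\sigma \|s\|_\sigma$ up to a universal normalization of the measure, so the defining formula for $\widehat\deg$ yields $\mathrm{covol}(L) = |D_K|^{1/2} e^{-\widehat\deg(\overline L)}$. The unit ball $B = \{m\in L_\RR : \|m\|_\sigma \leq 1 \text{ for all }\sigma\}$ is a product of intervals and disks whose volume is of the form $2^{r_1}\pi^{r_2}$. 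A Minkowski-style lattice-point count applied to $B$ and $L$ gives $\#(L\cap B) > \mathrm{vol}(B)/(2^\kappa\,\mathrm{covol}(L))$; taking logarithms and absorbing the $\pi^{r_2}$ factor into the universal constant produces the desired strict inequality, with strictness coming either from Minkowski's theorem itself or from the automatic contribution of $0\in L\cap B$.

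For (2), the right-hand inequality is immediate from $\rank_\ZZ N = \kappa\cdot\rank_{O_K} N$ for any torsion-free $O_K$-module $N$, applied to $N = O_K\langle\Hhat(\overline M)\rangle$, which contains $\ZZ\langle\Hhat(\overline M)\rangle$. For the left-hand inequality, I would fix once and for all a $\ZZ$-basis $\omega_1,\ldots,\omega_\kappa$ of $O_K$ and set
$$\delta = \max_{j,\,\sigma}\log|\sigma(\omega_j)|,$$
which depends only on $K$. Writing $r$ for the $O_K$-rank of the submodule generated by $\Hhat(\overline M(-\delta))$, one may choose $s_1,\ldots,s_r\in \Hhat(\overline M(-\delta))$ that are $O_K$-linearly independent (the rank of a module equals the maximal size of a linearly independent subfamily of any generating set). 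For each pair $i,j$, the estimate $\|\omega_j s_i\|_\sigma = |\sigma(\omega_j)|\,\|s_i\|_\sigma \leq e^\delta\cdot e^{-\delta} = 1$ places $\omega_j s_i$ in $\Hhat(\overline M)$. The $\kappa r$ products $\omega_j s_i$ are $\ZZ$-linearly independent, since any relation $\sum_{i,j} n_{ij}\omega_j s_i = 0$ with $n_{ij}\in\ZZ$ repackages as $\sum_i a_i s_i = 0$ with $a_i := \sum_j n_{ij}\omega_j \in O_K$, forcing $a_i = 0$ by $O_K$-independence of the $s_i$ and hence $n_{ij}=0$.

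The only real obstacle is the bookkeeping in (1): one must track the constants in Minkowski's counting bound carefully enough to arrive at the sharp constant $\kappa\log 2 + \tfrac{1}{2}\log|D_K|$, with no further $K$-dependence hidden in the normalization of measures at complex places. Part (2) presents no essential difficulty once the integral basis $\{\omega_j\}$ is fixed; it is purely linear algebra.
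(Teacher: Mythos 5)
Your proposal is correct and follows essentially the same route as the paper: part (1) is the standard Minkowski/geometry-of-numbers count (the paper simply cites Minkowski's theorem, and your covolume bookkeeping, with the comfortable slack of $\kappa\log 2$, carries it through), and part (2) uses exactly the paper's choice of $\delta$ from an integral basis of $O_K$, with your linearly-independent-family argument being just a rephrasing of the paper's inclusion $O_K\langle\Hhat(\overline M(-\delta))\rangle\subset \ZZ\langle\Hhat(\overline M)\rangle$ followed by comparing $\ZZ$-ranks.
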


\begin{proof}

The result in (1) follows from Minkowski's theorem. The second inequality of (2) follows from the inclusion
$$
\ZZ\langle\Hhat(\overline M)\rangle  \subset O_K\langle\Hhat(\overline M)\rangle. 
$$
For the first inequality of (2), fix a basis $(a_1,\cdots, a_\kappa)$ of $O_K$ over $\ZZ$. 
Set
$$
\delta=\max \{\log|a_i|_\sigma: \ i=1,\cdots, \kappa, \ \sigma: K\hookrightarrow \CC\}.
$$
Then 
\begin{eqnarray*}
&& O_K\langle\Hhat(\overline M(-\delta))\rangle \\
&\subset & 
\ZZ\langle a_1\Hhat(\overline M(-\delta)),\cdots, a_\kappa\Hhat(\overline M(-c))\rangle\\
&\subset & 
\ZZ\langle\Hhat(\overline M)\rangle.
\end{eqnarray*}
The inequality also follows. 
 
\end{proof}

\begin{proof}[Proof of Theorem \ref{key}]

If $K=\QQ$, then $\overline L_i$ is isometric to $\overline\ZZ(\alpha_i)$, and thus 
$$\overline M\otimes \overline L_i^\vee=\overline M(-\alpha_i).$$
The theorem is equivalent to the inequalities for $\hhat$ in \cite[Proposition 2.3]{YZ}. 
Next we consider the general case.

Denote $\kappa=[K:\QQ]$ and $\displaystyle c=\log2+ \frac{1}{2\kappa} \log|D_K|$.
Denote $\beta_i=\alpha_i/\kappa$ for convenience. 
View $\mb$ as a normed $\ZZ$-module, and apply \cite[Proposition 2.3]{YZ} to the sequence 
$$0=\beta_0\leq \beta_1+c\leq \cdots \leq \beta_n+c.$$
We obtain
\begin{eqnarray*}
\widehat h^0(\overline M) 
&\leq&
 \widehat h^0(\overline M(-\beta_n-c))+
\sum_{i=1}^{n} r_{i-1}' (\beta_{i}-\beta_{i-1})  + O(r_0 \log r_0).
\end{eqnarray*}
Here 
$$r_i'=\rank_{\ZZ} \ZZ\langle\widehat H^0(\overline M(-\beta_i-c))\rangle, 
\quad i=0,\cdots, n-1.$$

We claim that $r_i'\leq \kappa r_i$ by our special choice of $c$. 
It simply implies the second equality we need to prove. 
By Lemma \ref{kq} (1), $\Hhat(\overline L_i^\vee(\beta_i+c))\neq 0$.
Since 
$$ \overline M\otimes \overline L_i^\vee
=\overline M(-\beta_i-c) 
 \otimes \overline L_i^\vee(\beta_i+c),$$
we have
$$r_i'=\rank_{\ZZ} \ZZ\langle\widehat H^0(\overline M(-\beta_i-c))\rangle
\leq \rank_{\ZZ} \ZZ\langle\widehat H^0(\overline M\otimes \overline L_i^\vee)\rangle
\leq \kappa r_i.$$
The last inequality follows from Lemma \ref{kq} (2).
It proves the claim. 

As for the first inequality, apply \cite[Proposition 2.3]{YZ} to $\mb(\delta+c)$ viewed as a normed $\ZZ$-module, and
the sequence 
$$0=\beta_0\leq \beta_1\leq \cdots \leq \beta_n.$$
Here $\delta$ is as in Lemma \ref{kq} (2).
We obtain
\begin{eqnarray*}
\widehat h^0(\overline M(\delta+c)) 
 \geq  \sum_{i=1}^{n} r_i''(\beta_i- \beta_{i-1})+O( r_0 \log r_0+r_0).
\end{eqnarray*}
Here 
$$
r_i''= \rank_{\ZZ} \ZZ\langle\widehat H^0(\overline M(-\beta_i+\delta+c))\rangle.
$$
Lemma \ref{kq} (1) gives $\Hhat(\overline L_i(-\beta_i+c))\neq 0$, and thus
$$
r_i''
\geq \rank_{\ZZ} \ZZ\langle\widehat H^0(\overline M\otimes \overline L_i^\vee(\delta))\rangle
\geq \kappa r_i.
$$
Here the last inequality follows from Lemma \ref{kq} (2).
The proof is finished by the basic inequality
$$
\widehat h^0(\overline M)  \geq \widehat h^0(\overline M(\delta+c)) 
- \kappa r_0(\delta+c+\log3). 
$$ 
See \cite[Proposition 2.1]{YZ} or the original form \cite[Proposition 4]{GS}. 

\end{proof}

\section{The arithmetic Okounkov body}

In this section, we prove Theorem \ref{main}, and compare our construction with that of \cite{BC}.

\subsection*{Proof of the main theorem}

Recall that $X$ is a normal arithmetic variety with smooth generic fiber,
geometrically irreducible over $O_K$. 
Recall that we have the flag
$$X \supset Y_1 \supset Y_2 \supset \cdots \supset Y_d$$
of regular vertical subvarieties. 
We have assume that $Y_1=X_{\fwp}$ for some
prime ideal $\wp$ of $O_K$, and $Y_d\in Y_1(\fwp)$ is a rational point.

By \cite[Proposition 2.5]{Yu}, if $\lbb$ is a big hermitian line bundle on $X$, 
$$\lim_{m\rightarrow \infty} \frac{\# v_{Y.}(m\lbb)}{m^d}=\vol(\Delta_{Y.}).$$
Hence, Theorem \ref{main} is reduced to the following result, which strengthens
\cite[Theorem 2.6]{Yu}.

\begin{thm}\label{comparison level m}
For any $\lbb\in \pic$, 
\begin{eqnarray*}
\lim_{m\rightarrow \infty} \left|\frac{\# v_{Y.}(m\lbb)}{m^d} \log N_\wp -
\frac{\hhat(X,m\lbb)}{m^d}\right|=0.
\end{eqnarray*}

\end{thm}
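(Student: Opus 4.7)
The plan is to combine Theorem~\ref{key} with the geometric Okounkov body theory on the fiber $Y_1=X_{\fwp}$. First, I set up a one-parameter filtration on $\mb=H^0(X,m\lbb)$ (with the sup norms) by the valuation $\nu_1=\ord_{Y_1}$. Since $Y_1$ is the Cartier divisor pulled back from a uniformizer of $\wp$, the sub-$O_K$-module $F^i\subset H^0(X,m\lb)$ of sections with $\nu_1\geq i$ equals $\wp^i H^0(X,m\lb)$. Let $\overline L_i$ be the hermitian $O_K$-line bundle with underlying module $\wp^{-i}$ and trivial archimedean norms $\|1\|_\sigma=1$; then $\widehat\deg(\overline L_i)=i\log N_\wp$, and a direct check gives $\Hhat(\mb\otimes\overline L_i^\vee)=\Hhat(\mb)\cap F^i$.

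Applying Theorem~\ref{key} with $\alpha_i=i\log N_\wp$ and $n$ chosen so that $\Hhat(\mb)\cap F^n=\{0\}$ yields
\begin{equation*}
\Bigl|\,\hhat(X,m\lbb)-(\log N_\wp)\sum_{i=1}^n r_i\,\Bigr|=O(m^{d-1}\log m),
\end{equation*}
where $r_i=\rank_{O_K}O_K\langle\Hhat(\mb)\cap F^i\rangle$. The error bound uses $r_0\leq\rank_{O_K}H^0(X,m\lb)=O(m^{d-1})$, which makes $r_0\log r_0=O(m^{d-1}\log m)$, together with the fact that the telescoping gap $(\log N_\wp)r_0$ between the upper and lower bounds of Theorem~\ref{key} is also $O(m^{d-1})$.

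The theorem now reduces to showing $\bigl|\sum_{i\geq 1}r_i-\#v_{Y.}(m\lbb)\bigr|=o(m^d)$. For each $i$, the reduction map $\phi_i:F^i\to H^0(Y_1,m\lb|_{Y_1})$, $s\mapsto(s/s_{Y_1}^i)|_{Y_1}$, has kernel $F^{i+1}$; let $V_i\subset H^0(Y_1,m\lb|_{Y_1})$ be the $\fwp$-subspace spanned by $I_i^\circ:=\phi_i(\Hhat(\mb)\cap F^i)\setminus\{0\}$. Grouping Okounkov body points by their first coordinate gives $\#v_{Y.}(m\lbb)=\sum_i\#\nu_{Y_2,\ldots,Y_d}(I_i^\circ)$, while the geometric Okounkov body theorem on the $(d-1)$-dimensional $\fwp$-variety $Y_1$ (with flag $Y_2\supset\cdots\supset Y_d$) gives $\#\nu_{Y_2,\ldots,Y_d}(V_i-\{0\})=\dim_{\fwp}V_i$. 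The chain $\#\nu(I_i^\circ)\leq\dim V_i\leq r_i$ at once yields $\#v_{Y.}(m\lbb)\leq\sum r_i$.

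The main obstacle is the reverse inequality, for which one must bound the \emph{saturation defect} $r_i-\dim V_i$ (the failure of $O_K\langle\Hhat(\mb)\cap F^i\rangle$ to be $\wp$-saturated in $F^i$) and the \emph{coverage defect} $\dim V_i-\#\nu(I_i^\circ)$ (the failure of $I_i^\circ$ to hit every Okounkov point in its $\fwp$-span). Since the range of $i$ is $O(m)$, it suffices to bound each defect at a given level by $O(m^{d-2})$; both bounds should follow from the richness of small-norm sections of a big hermitian line bundle and the exactness of the Okounkov count on $Y_1$. With the defects controlled, the two inequalities combine to give $|\hhat(X,m\lbb)-(\log N_\wp)\#v_{Y.}(m\lbb)|=O(m^{d-1}\log m)$, and dividing by $m^d$ proves the theorem.
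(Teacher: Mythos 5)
Your setup coincides with the paper's: the filtration of $H^0(X,m\lb)$ by $\ord_{Y_1}$, the identification $F^i=\wp^iH^0(X,m\lb)=H^0(X,m\lb\otimes\wp^i)$, the twists $\overline L_i=\wp^{-i}$ with $\widehat\deg(\overline L_i)=i\log N_\wp$, and the application of Theorem \ref{key} giving $\hhat(X,m\lbb)=\log N_\wp\sum_i r_i+O(m^{d-1}\log m)$ (with the telescoping gap $r_0\log N_\wp$ absorbed) are all correct and are exactly how the paper argues. The decomposition of $v_{Y.}(m\lbb)$ by the first coordinate and the one-sided inequality $\#v_{Y.}(m\lbb)\le\sum_i r_i$ are also fine.

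The gap is the reverse comparison, which is the actual content of the theorem and which your proposal does not prove. You correctly isolate the two quantities — the coverage defect $\dim_\fwp V_i-\#\nu^\circ(I_i^\circ)$ and the saturation defect $r_i-\dim_\fwp V_i$ — but then only assert that each ``should'' be $O(m^{d-2})$ by ``the richness of small-norm sections of a big hermitian line bundle.'' No argument is given, and none is obvious: a priori each defect can be as large as $r_i=O(m^{d-1})$, so summed over the $O(m)$ levels it could swamp the main term; moreover Theorem \ref{comparison level m} is stated for an arbitrary $\lbb\in\pic$, so bigness is not even at your disposal. The paper closes precisely this gap by showing both defects vanish identically: by the geometric counting result of \cite{LM} applied to the image set $\Hhat(m\lbb\otimes\wp^i)|_{Y_1}$ one has $\#\nu^\circ(\Hhat(m\lbb\otimes\wp^i)|_{Y_1})=\dim_\fwp\fwp\langle\Hhat(m\lbb\otimes\wp^i)|_{Y_1}\rangle$, and the injection $H^0(m\lb\otimes\wp^i)/\wp H^0(m\lb\otimes\wp^i)\hookrightarrow H^0((m\lb\otimes\wp^i)|_{Y_1})$ identifies this dimension with $r_i$, yielding the exact identity $\#v_{Y.}(m\lbb)=\sum_{i\ge0}r_i$ with no error term at all. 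Note that this vanishing is not a routine estimate: it uses the specific structure of the sets $\Hhat(m\lbb\otimes\wp^i)$ and their reductions, which is exactly the arithmetic input your sketch leaves unaddressed. As it stands, your proposal establishes only one of the two inequalities needed, so it is incomplete; to finish it along the paper's lines you should prove the two per-level equalities above rather than estimate the defects.
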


Now we prove the theorem. 
Recall that 
$$v(\lbb)=v_{Y.}(\lbb)=\nu_{Y.}(\Hhat(X,\lbb))$$
is the image in $\ZZ^d$ of
the valuation map 
$$\nu=\nu_{Y.}=(\nu_1, \cdots, \nu_d): H^0(X,\lb)-\{0\} \rightarrow \ZZ^d$$
defined by the flag
$$X \supset Y_1 \supset Y_2 \supset \cdots \supset Y_d.$$

The flag 
$$Y_1 \supset Y_2 \supset \cdots \supset Y_d$$
 on the ambient variety $Y_1$ induces a valuation map 
$$\nu^\circ=(\nu_2, \cdots,\nu_d):  H^0(X,\lb_{\fwp})-\{0\} \rightarrow
\ZZ^{d-1}$$
of dimension $d-1$ in the geometric case. 
Similarly, we have a valuation map $\nu^\circ$ on the line bundle $(\lb\otimes \wp^i)|_{Y_1}$ for any $i\in\ZZ$. 
There are natural isomorphisms $(\lb\otimes \wp^i)|_{Y_1}\cong \lb_{\fwp}$,
which are compatible with the valuations.

The valuations $\nu$ and $\nu^\circ$ are compatible in the sense that 
$$\nu(s)=\left(\nu_1(s), \nu^\circ( (s_{Y_1}^{\otimes(-\nu_1(s))}s) |_{Y_1} )\right),\quad
s\in \Hhat(X,\lbb)-\{0\}.$$ 
Here $s_{Y_1}$ is the section of $\ob(Y_1)$ defining $Y_1$.

Decompose $v(\lbb)=\nu_{Y.}(\Hhat(\lbb))$ according to the first component in
$\ZZ^d$.
We have 
$$
v(\lbb)=\coprod_{i\geq 0} \left(i,\ \nu^\circ (\Hhat(\lbb\otimes\wp^i)|_{Y_1} )
\right).
$$
Here $\wp$ is naturally a hermitian line bundle on $\Spec(O_K)$, 
endowed with the metric induced from the trivial metric of $O_K$ via the inclusion $\wp\subset O_K$. 
It is also viewed as a hermitian line bundle on $X$ by pull-back.

By the geometric case in \cite{LM}, 
$$
\#\nu^\circ (\Hhat(\lbb\otimes\wp^i)|_{Y_1} )
=\dim_\fwp \fwp\langle\Hhat(\lbb\otimes\wp^i)|_{Y_1} \rangle.
$$
Here $\fwp\langle\Hhat(\lbb\otimes\wp^i)|_{Y_1} \rangle$ denotes the $\fwp$-vector
subspace of 
$H^0(\lb\otimes\wp^i|_{Y_1})$ generated by $\Hhat(\lbb\otimes\wp^i)|_{Y_1}$. 
The reduction map gives an injection
$$
H^0(\lb\otimes\wp^i)/\wp H^0(\lb\otimes\wp^i)\longrightarrow H^0(\lb\otimes\wp^i|_{Y_1}).
$$
It follows that 
$$
\dim_\fwp \fwp\langle\Hhat(\lbb\otimes\wp^i)|_{Y_1} \rangle
=\rank_{O_K} O_K\langle\Hhat(\lbb\otimes\wp^i)  \rangle.
$$
Here $ O_K\langle\Hhat(\lbb\otimes\wp^i)  \rangle$ denotes the $O_K$-submodule of 
$H^0(\lb\otimes\wp^i)$ generated by $\Hhat(\lbb\otimes\wp^i)$. 
Therefore, 
\begin{eqnarray} \label{one}
\# v(\lbb)=\sum_{i\geq 0} \rank_{O_K} O_K\langle\Hhat(\lbb\otimes\wp^i)  \rangle.
\end{eqnarray}
Note that it is essentially a finite sum. 

For any $i\geq 0$, denote $\overline L_i= \wp^{-i}$. Then 
$$\widehat\deg (\overline L_i)=i \log N_\wp.$$
Apply Theorem \ref{key} to the normed $O_K$-module 
$$\overline M=(H^0(\lb), \{\|\cdot\|_{\sigma,\sup}\}_\sigma)$$ 
and the sequence $\{\overline L_i\}_{i\geq 0}$. 
It is easy to obtain
\begin{eqnarray} \label{two}
\widehat h^0( \lbb)
= \sum_{i\geq 0} \rank_{O_K} O_K\langle\Hhat(\lbb\otimes\wp^i)  \rangle \cdot \log N_\wp
+O(h^0(\lb_\QQ)\log h^0(\lb_\QQ)).
\end{eqnarray}

Compare (\ref{one}) and (\ref{two}). We have
$$
\# v(\lbb)\cdot \log N_\wp=\widehat h^0( \lbb) +O(h^0(\lb_\QQ)\log h^0(\lb_\QQ)).
$$
Replace $\lbb$ by $m\lbb$. We obtain Theorem \ref{comparison level m}.

\subsection*{Construction of Boucksom--Chen}

In \cite{BC}, Boucksom and Chen constructed Okounkov bodies in a very general arithmetic setting. 
Here we briefly compare their construction with our construction in the setting of this paper.

We first recall the construction of \cite{BC}. 
Let $(X, \lbb)$ be as above. 
Denote by $Z_1=X_K$ the generic fiber. 
Fix a flag 
$$Z_1\supset Z_2 \supset \cdots \supset Z_d $$
on the generic fiber $Z_1$. 
It gives an Okounkov body $\Delta_{Z.}(\lb_K)\subset\RR^{d-1}$ of the generic fiber $\lb_K$.

Restricted to the generic fiber, we have a valuation 
$$
\nu_{Z.}^\circ=(\nu_2', \cdots, \nu_d'): \Hhat(X,\lbb)-\{0\} \rightarrow \ZZ^{d-1}.
$$
It gives a convex body $\Delta_{Z.}^{\circ}(\lbb)$ contained in $\Delta_{Z.}(\lb_K)$. 

More generally, for any $t\in \RR$, denote $\lbb(-t)=(\lb, e^{t}\|\cdot\|)$. 
Then we have a convex body $\Delta_{Z.}^{\circ}(\lbb(-t))$ of $\lbb(-t)$ contained in $\Delta_{Z.}(\lb_K)$.
The sequence $\{\Delta_{Z.}^{\circ}(\lbb(-t))\}_{t\in \RR}$ is decreasing in $t$. 
Define the incidence function $G_{Z.}:\Delta_{Z.}(\lb_K) \to \RR$ by
$$
G_{Z.}(x)= \sup\{t\in\RR: x\in \Delta_{Z.}^{\circ}(\lbb(-t)) \}. 
$$
Its graph gives a $(d+1)$-dimensional convex body 
$$
\Delta_{Z.}(\lbb)=\{ (x, t)\in  \Delta_{Z.}(\lb_K)\times \RR:
0\leq t\leq G_{Z.}(x)  \}.
$$ 
This is the Okounkov body constructed in \cite{BC}. 
One main result of \cite{BC} asserts that 
$$\vol(\Delta_{Z.}(\lbb))  =\frac{1}{d!}\vol(\lbb). $$

Go back to our construction. 
Recall that we have a flag 
$$X \supset Y_1 \supset \cdots \supset Y_d$$
on $X$. On the special fiber $Y_1=X_{\fwp}$, we have a flag 
$$Y_1 \supset Y_2\supset \cdots \supset Y_d.$$
It gives a valuation of the line bundle $\lb_{\fwp}$ on $Y_1$. 
Thus we get an Okounkov body $\Delta_{Y.}(\lb_{\fwp})$ in $\RR^{d-1}$. 

Restricted to the special fiber, we have a valuation 
$$
\nu_{Y.}^\circ=(\nu_2, \cdots, \nu_d): \Hhat(X,\lbb)-\{0\} \rightarrow \ZZ^{d-1}.
$$
It gives a convex body $\Delta_{Y.}^{\circ}(\lbb)$ contained in $\Delta_{Y.}(\lb_\fwp)$. 

The hermitian line bundle $\lbb\otimes \wp^t$ gives a convex body
$\Delta_{Y.}^{\circ}(\lbb\otimes \wp^t)$ in $\RR^{d-1}$ for any $t\in \ZZ$.
By passing to tensor powers, extend the definition of $\Delta_{Y.}^{\circ}(\lbb\otimes \wp^t)$ to
any $t\in\QQ$. 
The sequence $\{\Delta_{Y.}^{\circ}(\lbb\otimes \wp^t)\}_{t\in \QQ}$ is decreasing in $t$. 
Define the incidence function $G_{Y.}:\Delta_{Y.}(\lb_\fwp) \to \RR$ by
$$
G_{Y.}(x)= \sup\{t\in\QQ: x\in \Delta_{Y.}^{\circ}(\lbb\otimes \wp^t) \}. 
$$
Its graph gives a $(d+1)$-dimensional convex body 
$$
\Delta_{Y.}(\lbb)=\{ (x, t)\in  \Delta_{Y.}(\lb_\fwp)\times \RR:
0\leq t\leq G_{Y.}(x)  \}.
$$ 
It recovers our previous construction. 

Hence, we see that the similarity of these two constructions. The construction of \cite{BC} is archimedean in nature, 
while the construction of \cite{Yu} is non-archimedean in nature.

\end{document}